\theoremstyle{plain}
\newtheorem{lemma}{Lemma}
\newtheorem{theorem}{Theorem}
\newtheorem{observation}[theorem]{Observation}
\theoremstyle{remark}
\theoremstyle{definition}
\renewcommand{\maketitle}{
	\begin{center}

		\baselineskip=0.30in
		{\Large\bfseries \@title} \par
		\vspace{5mm}
		\baselineskip=0.2in
		{\large\bfseries \@author}\par
		\vspace{1mm}
		{\it \@address} \par
		{\small\tt \@email} \par
		\vspace{3mm}
		{\small } \par
	\end{center}
	\vspace{3mm}
}
\newcommand{\address}[1]{\def\@address{#1}}
\newcommand{\email}[1]{\def\@email{#1}}
\address{}
\newcommand{\acknowledgment}[1]{\vspace{5mm}\singlespacing
	{\noindent\textbf{\textit{Acknowledgment\/}:} #1}
}
\title{On the Diminished Sombor Index of Fixed-Order Molecular Graphs With Cyclomatic Number at Least 3}
\author{Abdulaziz Mutlaq Alotaibi$^{1}$, Abdulaziz M. Alanazi$^{2}$, Taher S. Hassan$^{3,4}$, Akbar Ali$^{3,}$\footnote{Corresponding author.}}
\address{$^1$Department of Mathematics,\\ College of Science and Humanities in AlKharj,\\ Prince Sattam Bin Abdulaziz University, AlKharj 11942, Saudi Arabia\\
$^2$Department of Mathematics, Faculty of Sciences,\\ University of Tabuk, Tabuk, Saudi Arabia\\
$^3$Department of Mathematics,  College of Science,\\ University of Ha\!'il, Ha\!'il 2440, Saudi Arabia\\
$^{4}$Department of Mathematics, Faculty of Science\\ Mansoura University, Mansoura 35516, Egypt
}
\email{am.alotaibi@psau.edu.sa, am.alenezi@ut.edu.sa, tshassan@yahoo.com, akbarali.maths@gmail.com}
\begin{document}

\maketitle

\begin{abstract}
For a graph $G$ with edge set $E$, let $d(u)$ denote the degree of a vertex $u$ in $G$. The diminished Sombor (DSO) index of $G$ is defined as $DSO(G)=\sum_{uv\in E}\sqrt{(d(u))^2+(d(v))^2}(d(u)+d(v))^{-1}$. The cyclomatic number of a graph is the smallest number of edges whose removal makes the graph acyclic. A connected graph of maximum degree at most $4$ is known as a molecular graph.
The primary motivation of the present study comes from a conjecture, concerning the minimum DSO index of fixed-order connected graphs with cyclomatic number $3$, posed in the recent paper [F. Movahedi, I. Gutman, I. Red\v{z}epovi\'c, B. Furtula,  Diminished Sombor index, {\it MATCH Commun. Comput. Chem.\/} {\bf 95} (2026) 141--162]. The present paper gives all graphs minimizing the DSO index among all molecular graphs of order $n$ with cyclomatic number $\ell$, provided that $n\ge 2(\ell-1)\ge4$.
\end{abstract}

\onehalfspacing

\section{Introduction}

Chemical graph theory  \cite{Leite-24}, a branch of mathematical chemistry, employs graph-theoretical concepts and techniques to model and analyze molecular structures. (The graph-theoretical and chemical graph-theoretical terminology, used in this study, but not defined here, can be found in~\cite{Bondy-book,Chartrand-16} and~\cite{Wagner-18,Trina-book}, respectively.) In this framework, molecules are represented by graphs in which atoms correspond to vertices and chemical bonds to edges, providing a rigorous mathematical basis for investigating molecular properties. In graph-theoretical language, a connected graph of maximum degree at most $4$ is known as a molecular graph.
(Sometimes, molecular graphs are defined as the connected graphs of maximum degree at most 3; e.g., see \cite{Bonte-25}.)

Molecular descriptors are fundamental tools for virtual screening of molecular libraries and for predicting the physicochemical properties of molecules~\cite{Basak-book}. According to Todeschini and Consonni~\cite{Todeschini-20-book}, a molecular descriptor is ``the final result of a logical and mathematical procedure that transforms chemical information encoded in a symbolic representation of a molecule into a useful number or the result of some standardized experiment.'' When such descriptors are defined using molecular graphs, they are commonly referred to as topological indices in chemical graph theory.  For details on the chemical applications of topological indices, readers may consult the recent works~\cite{Desmecht-JCIM-24,Leite-24}.

Among these indices, degree-based topological indices~\cite{Ali-MMN-22,KD1,gut04,gutman13degree,Ali-CJC-16,borovicanin17zagreb,Ali-IJACM-17,Ali-IJQC-17,nithyaa24} play a particularly prominent role. Owing to their computational efficiency and predictive power, degree-based indices are fundamental tools in quantitative structure-property relationship (QSPR) studies, which facilitate the design and analysis of new chemical compounds.

Among the many degree-based topological indices, one of the widely studied is the Sombor index, introduced by Gutman \cite{gutman21geo}. For a graph $G$, it is defined as
\[
\mathcal{SO}(G) = \sum_{uv \in E(G)} \sqrt{d(u)^2 + d(v)^2},
\]
where $E(G)$ denotes the edge set of $G$, and $d(u)$ and $d(v)$ represent the degrees of the vertices $u$ and $v$, respectively. (When more than one graph is under discussion simultaneously, we write $d_G(u)$ to specify the degree of vertex $u$ in the graph $G$.)
The Sombor index has attracted considerable research attention; e.g., see the survey~\cite{liu22review} and some recent publications \cite{chen24,das24open,new2,new3,new11}.

In \cite{gutman21geo}, two variants of the Sombor index were considered, namely the reduced and average Sombor indices. Since then, numerous further modifications have been proposed, including the elliptic Sombor~\cite{gutman24elliptic,gutman24eu}, Euler--Sombor~\cite{albalahi25tricyclic,new7}, Zagreb--Sombor~\cite{Ali-AIMS-25,Ali-AIMS-2024}, diminished Sombor~\cite{Movahedi-26}, and augmented Sombor~\cite{ASO-index} indices. The present paper is concerned with the diminished Sombor (DSO) index. For a graph $G$, the DSO index is defined by
\[
\mathcal{DSO}(G)=\sum\limits_{uv\in E(G)}\frac{\sqrt{(d(u))^2+(d(v))^2}}{d(u)+d(v)}.
\]

The cyclomatic number of a graph is the smallest number of edges whose removal makes the graph acyclic.
The primary motivation of the present study comes from a conjecture, concerning the graphs minimizing the DSO index among all fixed-order connected graphs with cyclomatic number $3$, posed in the recent paper \cite{Movahedi-26}. According to this conjecture, the aforementioned extremal graphs minimizing the DSO index ``are those obtained by connecting two disjoint cycles by two edges, so that a quadrangle is formed.'' In this paper, it is shown that the graphs minimizing the DSO index among all molecular graphs of order $n$ with cyclomatic number $\ell$ are either $3$-regular graphs, or graphs with maximum degree $3$ and minimum degree $2$ in which the number of edges connecting the vertices of degrees $2$ and $3$ is $2$, provided that $n \geq 2(\ell-1) \geq 4$. Consequently, the structures of the extremal graphs conjectured in \cite{Movahedi-26} differ slightly from the actual ones.

We end this introductory section with the remark that the literature contains many general results about degree-based topological indices, in which the extremal graphs are similar to the ones obtained in the present paper; e.g., see the recent papers \cite{Ali-AIMS-M-25,Su-26} as well as Theorem 5.5, Theorems 5.9--5.17, and Theorem 5.23 in the recent survey paper \cite{Ali-BID-survey}. However, none of these general results is applicable to the DSO index.

\section{Results}

We start this section by recalling some definitions. Let $G$ be a graph with vertex set $V(G)$ such that $|V(G)|=n$. The graph $G$ is known as an $n$-order graph. For a vertex $u \in V(G)$, define  $N_G(u) = \{v \in V(G) : uv \in E(G)\}$. The elements of $N_G(u)$ are called neighbors of $u$ in $G$.  A vertex of degree one (at least three, respectively) is called a pendant vertex (branching vertex, respectively). An edge incident with a pendent vertex is called a pendent edge.
The minimum and maximum degrees of a graph $G$ are denoted by $\delta(G)$ and $\Delta(G)$, respectively.

Now, we recall the definition of circulant graphs (see, e.g.,
\cite{Cichacz-DM-16,Vetrik-CMB-16}).
Let $r$, $k$ and $a_1, a_2, \dots, a_k$ be positive integers such
that $$1 \le a_1< a_2< \dots< a_k \le \lfloor r/2\rfloor.$$
The circulant graph, denoted by $C(r;a_1, a_2, \dots, a_k)$, is the graph
with vertex set $\{v_0,v_1, v_2, \dots, v_{r-1}\}$ and edge set $$\{v_iv_{i+a_j}: 0\le i\le r-1 \text{ and } 1\le j\le k, \text{ where $i+a_j$ is taken modulo
$r$}\}.$$

\begin{observation}[e.g., see \cite{Boesch-JGT-84,Vetrik-CMB-16}]\label{obs-00}
If $a_k <   r/2$, then  $C(r;a_1, a_2, \dots, a_k)$ is $2k$-regular.
\end{observation}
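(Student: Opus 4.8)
The plan is to compute the degree of a single, arbitrary vertex and show it equals $2k$; since all vertices of a circulant graph play symmetric roles, this suffices. Fix an index $i$ with $0\le i\le r-1$ and determine the neighborhood of $v_i$ explicitly. By the definition of the edge set, $v_i$ is adjacent to $v_{i+a_j}$ for every $j\in\{1,\dots,k\}$; moreover, rewriting the defining edge $v_{i'}v_{i'+a_j}$ with $i'=i-a_j$ shows that $v_i$ is also adjacent to $v_{i-a_j}$ for every such $j$. Hence every neighbor of $v_i$ appears in the list
\[
v_{i+a_1},\, v_{i-a_1},\, v_{i+a_2},\, v_{i-a_2},\,\dots,\, v_{i+a_k},\, v_{i-a_k},
\]
where all subscripts are read modulo $r$. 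It then remains to verify that these $2k$ vertices are pairwise distinct, since that would give $d(v_i)=2k$, and $i$ being arbitrary would yield $2k$-regularity.

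Two vertices $v_s$ and $v_t$ coincide precisely when $s\equiv t\pmod{r}$, so distinctness of the listed neighbors is equivalent to distinctness of the $2k$ residues $\pm a_1,\dots,\pm a_k$ modulo $r$. I would dispose of the coincidences type by type. Because $1\le a_1<a_2<\dots<a_k\le\lfloor r/2\rfloor<r$, the values $a_1,\dots,a_k$ are already distinct as residues modulo $r$, so no relation $a_j\equiv a_{j'}\pmod{r}$ (nor, symmetrically, $-a_j\equiv -a_{j'}$) can occur with $j\ne j'$. The only remaining possibility is a mixed coincidence $a_j\equiv -a_{j'}\pmod{r}$, including the self-coincidence $a_j\equiv -a_j$.

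This mixed case is exactly where the hypothesis $a_k<r/2$ is used, and it is the only genuine obstacle. A relation $a_j\equiv -a_{j'}\pmod{r}$ means $r\mid(a_j+a_{j'})$; but $0<a_j+a_{j'}\le 2a_k<r$, so $a_j+a_{j'}$ is a positive integer strictly smaller than $r$ and cannot be divisible by $r$. This simultaneously rules out $j\ne j'$ and the case $a_j\equiv -a_j$ (which would force $r\mid 2a_j$ with $0<2a_j<r$). Thus all $2k$ residues $\pm a_j$ are distinct modulo $r$, the $2k$ neighbors above are distinct, and $d(v_i)=2k$, as claimed. I would close with a remark on sharpness: when $r$ is even and $a_k=r/2$, one has $v_{i+a_k}=v_{i-a_k}$, so that offset contributes only a single neighbor and the degree drops to $2k-1$; this is precisely why the strict inequality $a_k<r/2$ is required rather than the defining bound $a_k\le\lfloor r/2\rfloor$ alone.
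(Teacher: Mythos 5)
Your proof is correct, and in fact the paper offers no proof of this observation at all---it is stated as a known fact with citations to Boesch--Tindell and Vetr\'{\i}k, so there is no in-paper argument to compare against; your direct degree count is precisely the standard argument from that literature. You correctly isolate the only nontrivial point, namely that $a_k<r/2$ rules out the mixed coincidences $a_j\equiv -a_{j'}\pmod{r}$ (including $2a_j\equiv 0$), and your closing remark on sharpness when $a_k=r/2$ explains exactly why the observation's strict inequality, rather than the defining bound $a_k\le\lfloor r/2\rfloor$, is needed.
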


\begin{lemma}{\rm\cite{Boesch-JGT-84}}\label{lem-00}
    The circulant graph $C(r;a_1, a_2, \dots, a_k)$ is connected if and only if $\gcd(a_1, a_2, \dots, a_k,r) = 1$.
\end{lemma}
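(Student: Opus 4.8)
The plan is to translate the connectivity question into a statement about the additive group $\mathbb{Z}_r$. First I would identify each vertex $v_i$ with the residue class $i \in \mathbb{Z}_r$, so that the generating set $\{a_1,\dots,a_k\}$ prescribes the permitted ``steps'': from a vertex $i$ one may move to $i\pm a_j$ for any $j$. The key structural observation is that the set of vertices reachable from $v_0$ by walks in the graph is precisely the additive subgroup $H=\langle a_1,\dots,a_k\rangle$ of $\mathbb{Z}_r$ generated by the jump lengths. Indeed, every vertex reachable from $v_0$ has the form $\sum_j c_j a_j \pmod r$ for integers $c_j$ (each edge traversal contributes $\pm a_j$), so the reachable set is contained in $H$; conversely, any element $\sum_j c_j a_j$ of $H$ is reached from $v_0$ by taking $|c_j|$ steps of length $a_j$ in the appropriate direction, so the reachable set equals $H$.

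Next I would invoke the standard description of cyclic subgroups: the subgroup of $\mathbb{Z}_r$ generated by $a_1,\dots,a_k$ coincides with the subgroup generated by $d:=\gcd(a_1,\dots,a_k,r)$, since the subgroup of $\mathbb{Z}$ generated by these integers is $\gcd(a_1,\dots,a_k)\mathbb{Z}$ and passing to $\mathbb{Z}_r$ adjoins the relation $r\equiv 0$. Hence $H=\langle d\rangle$, which is all of $\mathbb{Z}_r$ exactly when $d=1$ and is a proper subgroup otherwise. This already shows that the component of $v_0$ is the whole vertex set if and only if $\gcd(a_1,\dots,a_k,r)=1$.

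Finally, to pass from reachability out of the single vertex $v_0$ to connectivity of the whole graph, I would use vertex-transitivity: for each $t\in\mathbb{Z}_r$ the map $v_i\mapsto v_{i+t}$ is a graph automorphism, so every connected component is a translate of the component of $v_0$ and therefore has the same size $|H|$. Consequently $G$ is connected if and only if $|H|=r$, i.e.\ if and only if $d=1$, which is the claimed criterion. I do not anticipate a genuine obstacle here, as this is a classical fact; the only points requiring care are the double inclusion establishing that the reachable set is exactly $H$, and the elementary identity $\langle a_1,\dots,a_k\rangle=\langle\gcd(a_1,\dots,a_k,r)\rangle$ in $\mathbb{Z}_r$, both of which are routine once the group-theoretic reformulation is in place.
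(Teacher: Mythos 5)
Your argument is correct, but note that there is no proof in the paper to compare it against: Lemma~\ref{lem-00} is stated as a known result and attributed to Boesch and Tindell \cite{Boesch-JGT-84}, so the authors supply only a citation. Your proposal is a sound self-contained proof of this classical fact, and it is essentially the standard one: the reachable set from $v_0$ is the subgroup $H=\langle a_1,\dots,a_k\rangle\le\mathbb{Z}_r$, and since $H$ is the image of the subgroup $a_1\mathbb{Z}+\dots+a_k\mathbb{Z}+r\mathbb{Z}=\gcd(a_1,\dots,a_k,r)\mathbb{Z}$ of $\mathbb{Z}$ under the quotient map, one gets $H=\langle d\rangle$ with $d=\gcd(a_1,\dots,a_k,r)$, which equals $\mathbb{Z}_r$ exactly when $d=1$. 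One small simplification: your final vertex-transitivity step is superfluous, since a graph is connected if and only if every vertex is reachable from one fixed vertex, so once the reachable set from $v_0$ is identified with $\langle d\rangle$, connectivity is immediately equivalent to $d=1$ without any appeal to automorphisms (though the transitivity observation is correct and does no harm). The only definitional subtlety worth a sentence in a written-out version is that when $r$ is even and $a_k=r/2$ the construction can be read as producing parallel edges $v_iv_{i+r/2}$; this has no effect on connectivity, so your argument goes through unchanged.
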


\begin{lemma}\label{lem-01}
Let $G$ be a graph minimizing the DSO index among all molecular $n$-order graphs with cyclomatic number $\ell$ such that $n \ge 2(\ell-1)\ge 4$. Then, $G$ contains no pendent edge incident with a vertex of degree $2$.

\end{lemma}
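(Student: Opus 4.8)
The plan is to argue by contradiction. Assume the minimizer $G$ contains a pendent vertex $v$ whose unique neighbour $u$ has degree $2$, and let $w$ be the other neighbour of $u$. I will build a molecular graph $G'$ of the same order $n$ and the same cyclomatic number $\ell$ with $\mathcal{DSO}(G')<\mathcal{DSO}(G)$, contradicting the minimality of $G$. The idea is to raise the degree of $v$ from $1$ to $2$ by relocating one unit of degree from a vertex of \emph{maximum} degree. Since the hypothesis $2(\ell-1)\ge 4$ gives $\ell\ge 3$, the handshake identity $\sum_{x}(d(x)-2)=2(\ell-1)>0$ forces $\Delta(G)\ge 3$, and the pendent vertex forces $G$ to be non-regular; hence the set of maximum-degree vertices is a proper non-empty subset of $V(G)$, and connectedness supplies a maximum-degree vertex $x$ together with a neighbour $c$ satisfying $d(c)<\Delta(G)$ and $c\ne u$. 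I then set $G'=G-xc+vc$.

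First I would record the monotonicity of the edge weight $\phi(a,b)=\sqrt{a^2+b^2}/(a+b)$: from $\partial_a(\phi^2)=2b(a-b)/(a+b)^3$ one sees that, for fixed $b$, the map $\phi(\cdot,b)$ strictly decreases on $a<b$ and strictly increases on $a>b$, with minimum $\tfrac{1}{\sqrt2}$ at $a=b$. Next I would check that $G'$ is admissible: it has order $n$; its degrees coincide with those of $G$ except that $d(v)$ becomes $2$ and $d(x)$ drops by one, so $\Delta(G')\le 4$ and $G'$ is molecular; and $|E(G')|=|E(G)|$. Choosing the edge $xc$ on a cycle (or, when $xc$ is a bridge, checking that $v$ lies on the side of $x$) keeps $G'$ connected, whence its cyclomatic number is again $\ell$.

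The heart of the matter is the sign of $\mathcal{DSO}(G)-\mathcal{DSO}(G')$. Exactly three groups of edges change weight: the pendent edge $uv$, from $\phi(1,2)$ to $\phi(2,2)$, a strict decrease of $\phi(1,2)-\phi(2,2)=\sqrt5/3-\tfrac{1}{\sqrt2}\approx 0.0382$; the edge at $c$, from $\phi(\Delta,d(c))$ to $\phi(2,d(c))$; and the remaining $\Delta-1$ edges incident to $x$, each from $\phi(\Delta,d(z))$ to $\phi(\Delta-1,d(z))$. Because $x$ has maximum degree, every neighbour satisfies $d(z)\le\Delta$, so by the monotonicity lemma each of these last edges can only increase when $d(z)=\Delta$, and then only by the small amount $\phi(\Delta-1,\Delta)-\phi(\Delta,\Delta)$. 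As all degrees lie in $\{1,2,3,4\}$, finitely many numerical inequalities remain; the tightest occurs for $\Delta=4$ when $d(c)=3$ and the three other neighbours of $x$ have degree $4$, where the total increase $\phi(2,3)-\phi(4,3)+3\bigl(\phi(3,4)-\phi(4,4)\bigr)\approx 0.0284$ is still strictly below the gain $0.0382$, so $\mathcal{DSO}(G')<\mathcal{DSO}(G)$.

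The main obstacle is precisely this final verification, and it dictates the two design choices. Relocating degree from a vertex of maximum degree (rather than from an arbitrary branching vertex) is essential: reducing a degree-$3$ vertex that has a degree-$4$ neighbour would push that edge from $\phi(3,4)$ to $\phi(2,4)$, an increase of about $0.031$ per such neighbour, which can exceed the fixed gain $0.0382$ and destroy the inequality; maximality rules this out since it forbids higher-degree neighbours of the reduced vertex. Keeping $c\ne u$ (and $vc\notin E(G)$, automatic as $v$ is pendent) prevents a multi-edge and leaves the edge $uw$ untouched, or—in the degenerate case $x=w$—improves it, which only helps. A secondary technical point is the connectivity and cyclomatic bookkeeping when every suitable boundary edge $xc$ is a bridge; I would resolve it by preferring a cycle edge, whose availability is underwritten by the many degree-$2$ vertices forced by $n\ge 2(\ell-1)$.
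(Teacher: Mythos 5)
Your monotonicity lemma and the finite numerical check are correct, and the idea of relocating an edge from a \emph{maximum}-degree vertex (rather than the nearest branching vertex, as the paper does) is genuinely different from the paper's route --- but the proof has a real gap exactly where you claim that connectedness ``supplies a maximum-degree vertex $x$ together with a neighbour $c$ satisfying $d(c)<\Delta(G)$ and $c\ne u$,'' and in the subsequent bridge repair. Both steps can fail, simultaneously, on admissible graphs. Concretely, take $K_5$ on $\{w,y_1,y_2,y_3,y_4\}$, delete the edges $wy_1$, $wy_2$, $y_3y_4$, and attach the path $w\,u\,v$, so $d(u)=2$, $d(v)=1$: this is a connected molecular graph with $n=7$, $\ell=3$ (so $n\ge 2(\ell-1)\ge 4$), every vertex outside $\{u,v\}$ has degree $\Delta=3$, and the \emph{only} edge from a maximum-degree vertex to a vertex of smaller degree is $wu$, i.e.\ the forbidden choice $c=u$; your switch is undefined. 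Lengthening the attachment to $w\,p_1\,u\,v$ makes the pair exist ($x=w$, $c=p_1\ne u$), but the unique boundary edge $wp_1$ is a bridge whose deletion leaves $v$ on the side of $c$, so $G-xc+vc$ is disconnected, and there is no cycle boundary edge to ``prefer'' --- your closing remark that the degree-$2$ vertices forced by $n\ge 2(\ell-1)$ underwrite the availability of such an edge is unsupported and false here. Since the lemma argues by contradiction about an arbitrary hypothetical minimizer, you may not discard these graphs as obviously non-minimal: for them your construction produces no admissible $G'$, and the proof does not close. Note also that the restriction $d(c)<\Delta$ is not removable, since for $d(c)=\Delta=3$ your worst-case increase becomes $3\bigl(\phi(2,3)-\phi(3,3)\bigr)\approx 0.042>0.0382$; the bad family is precisely where this restriction bites.

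For comparison, the paper avoids this trap by anchoring the surgery at the branching vertex $w$ nearest to the pendent vertex, which makes connectivity automatic (the pendent vertex stays joined to $w$ along the untouched path, so the detached piece always reattaches), and it then pays for that choice with a heavy case analysis when $w$ has degree $3$ and two degree-$4$ neighbours: a secondary switch at a neighbour of $w_1$, and in the worst subcase a global comparison (Claim 1) against an explicit competitor built from the circulant graph $C(\ell-1;1,2)$. Your max-degree idea would bypass all of that Case 2 machinery whenever a usable boundary edge exists, which is an attractive simplification; but to repair the proof you still need a separate argument for the degenerate configurations above. That repair is available: in such configurations a parity count rules out $\Delta=4$, so $\Delta=3$ and all non-path neighbours of the nearest branching vertex have degree $3$, whence the paper's Case 1 switch (delete an edge $w w_1$, join $w_1$ to the pendent vertex) strictly decreases the index, the net change being $\approx 0.0382+0.0140-2\cdot 0.0140=0.0242>0$. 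Without some such supplementary case, the lemma remains unproved for exactly the graphs your construction cannot touch.
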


\begin{proof}
We suppose to the contrary that $G$ contains at least one pendent edge incident with a vertex of degree $2$, say $uv\in E(G)$, where $d_G(u)=1$ and $d_G(v)=2$. We consider a branching vertex $w\in V(G)$ in such a way that the distance between $w$ and $u$ is the least. Then, every vertex different from $u$ and $w$ on the unique $u-w$ path, say $P$, has degree $2$ in $G$. Since $\ell\ge3$, $w$ has at least two non-pendent neighbors. Let $N_G(w)=\{w_1,w_2,\dots,w_s\}$, where $w_s$ lies on $P$ and $s\in \{3,4\}$. We may assume that $d_G(w_1)=\max\{d_G(w_i):1\le i\le s-1\}$. Here, we define  $f(i,j):=\sqrt{i^2+j^2}/(i+j)$.\\[2mm]
{\bf Case 1.} Either $d_G(w)=4$, or $d_G(w)=3$ and $(d_G(w_1),d_G(w_2))\ne(4,4)$.\\
We form a new graph $G_1$ using $G$ by removing the edge $w_1w$ and inserting the edge $w_1u$. Then
\begin{align}\label{lem-01-eq-01}
 \mathcal{DSO}(G)-\mathcal{DSO}(G_1) &=\sum_{i=2}^{s-1} \big[f(d_G(w),d_G(w_i))-f(d_G(w)-1,d_G(w_i)) \big]\nonumber\\[2mm]
 &\quad + f(d_G(w),2)-f(d_G(w)-1,2)+f(1,2)-f(2,2)\nonumber\\[2mm]
 &\quad +f(d_G(w),d_G(w_1))-f(2,d_G(w_1)).
  \end{align}
If $d_G(w)=4$ (that is, $s=4$), then using the facts that  $d_G(w_1)\ge 2$ and $1\le d_G(w_i)\le d_G(w_1)\le 4$ for each $i\in\{2,3\}$, we obtain from \eqref{lem-01-eq-01},  $\mathcal{DSO}(G)-\mathcal{DSO}(G_1)>0$, a contradiction to the definition of $G$.

If $d_G(w)=3$ (that is, $s=3$) and $(d_G(w_1),d_G(w_2))\ne(4,4)$, then using the facts that  $d_G(w_1)\ge 2$ and $1\le d_G(w_2)\le d_G(w_1)\le 4$, we again obtain from \eqref{lem-01-eq-01},  $\mathcal{DSO}(G)-\mathcal{DSO}(G_1)>0$, a contradiction.\\[2mm]
{\bf Case 2.}  $d_G(w)=3$ and $(d_G(w_1),d_G(w_2))=(4,4)$.\\
{\bf Case 2.1.} Either $w_1$ or $w_2$ does not have three neighbors of degree $4$.\\
Without loss of generality, we assume that $w_1$ does not have three neighbors of degree $4$. We choose a vertex $w_1'\in N_G(w_1)\setminus\{w\}$ in such a way that it satisfies the equation $d_G(w_1')=\min\{d_G(w'):w'\in N_G(w_1)\setminus\{w\}\}$. Then, $d_G(w_1')\le3$. Now, we form a new graph $G_2$ using $G$ by deleting the edge $w_1'w_1$ and inserting the edge $w_1'u$. Then, we have
\begin{align*}
 \mathcal{DSO}(G)-\mathcal{DSO}(G_2) &=\sum_{w'\in N_G(w_1)\setminus\{w,w_1'\}} \big[f(d_G(w'),4)-f(d_G(w'),3) \big]\nonumber\\[2mm]
 &\quad + f(d_G(w_1'),4)-f(d_G(w_1'),2)+f(1,2)-f(2,2)\nonumber\\[2mm]
 &\quad +f(3,4)-f(3,3)>0,
  \end{align*}
a contradiction.\\[2mm]
{\bf Case 2.2.} Both $w_1$ and $w_2$ have three neighbors of degree $4$.\\
We consider the graph $G^*$ obtained from $G$ by removing all the vertices of $P$ except $w$ and the edges incident with them. Then, $d_{G^*}(w)=2$ and $d_{G^*}(x)=d_{G}(x)$ for every $x\in V(G)\setminus V(P)$, where $V(P)$ is the set of all vertices belonging to the path $P$. In order to complete the discussion of Case 2.2 (and the proof of the lemma), we first prove a claim. \\[2mm]
{\bf Claim 1.} There exists at least one vertex $y\in V(G^*)\setminus\{w\}$ such that $1\le d_G(y)\le 3$.\\[2mm]
{\bf Proof of Claim 1.} Contrarily, assume that every $y\in V(G^*)\setminus\{w\}$ satisfies $d_G(y)=4$, that is, $d_{G^*}(y)=4$. We note that both the graphs $G$ and $G^*$ have the same cyclomatic number, namely, $\ell$. Hence, by the Handshaking Lemma, we have
\begin{equation}\label{eq-ell}
4(|V(G^*)|-1)+2=2|E(G^*)|=2(|V(G^*)|+\ell-1).
\end{equation}
If $\ell\le 5$, then by Equation \eqref{eq-ell}, we have
$|V(G^*)|\le 5,$
which contradicts the fact that $d_{G^*}(w)=2$ because  every $y\in V(G^*)\setminus\{w\}$ satisfies $d_{G^*}(y)=4$, according to our assumption. Next, we consider the case where $\ell\ge 6$. Then, by Equation \eqref{eq-ell}, we have
$|V(G^*)|\ge 6$. Now, we show that for every $\ell\,(\ge 6)$, there exists at least one connected graph with order $\ell$ and degree sequence $(\,\underbrace{4,4,\dots,4}_{\ell-1},2)$. By Observation \ref{obs-00}, the circulant graph $C(\ell-1;1, 2)$, with $\ell\ge6$, is $4$-regular. Also, by Lemma \ref{lem-00}, $C(\ell-1;1, 2)$ is connected. Let $C^\star(\ell-1;1, 2)$ denote the graph formed from $C(\ell-1;1, 2)$ by removing an edge $v_0v_1$ and adding a new vertex $v'$ together with the edges $v'v_0$
and $v'v_1$. Certainly, the order and the degree sequence of the graph $C^\star(\ell-1;1, 2)$ are $\ell$ and $(\,\underbrace{4,4,\dots,4}_{\ell-1},2)$, respectively. Thus, by keeping in mind the above discussion, we have $$m_{1,2}(G)=1=m_{2,3}(G),\ m_{3,4}(G)=2,\ m_{4,4}(G)=2\ell-3$$
and $m_{2,2}(G)=n-\ell-2$, where $m_{i,j}(G)$ represents the number of edges of $G$ with end vertices of degrees $i$ and $j$. Consequently, we have
\[
\mathcal{DSO}(G)=(n-\ell-2)f(2,2)+(2\ell-3)f(4,4)+f(1,2)+f(2,3)+2f(3,4).
\]
Let $G^\ddag$ be the $n$-order molecular graph with cyclomatic number $\ell$ such that  $n\ge 2(\ell-1)\ge4$
and $$m_{2,3}(G^\ddag)=2,\ m_{3,3}(G^\ddag)=3\ell - 4, \ m_{2,2}(G^\ddag)= n - 2\ell + 1.
$$

\begin{figure}[!ht]
 \centering
  \includegraphics[width=0.65\textwidth]{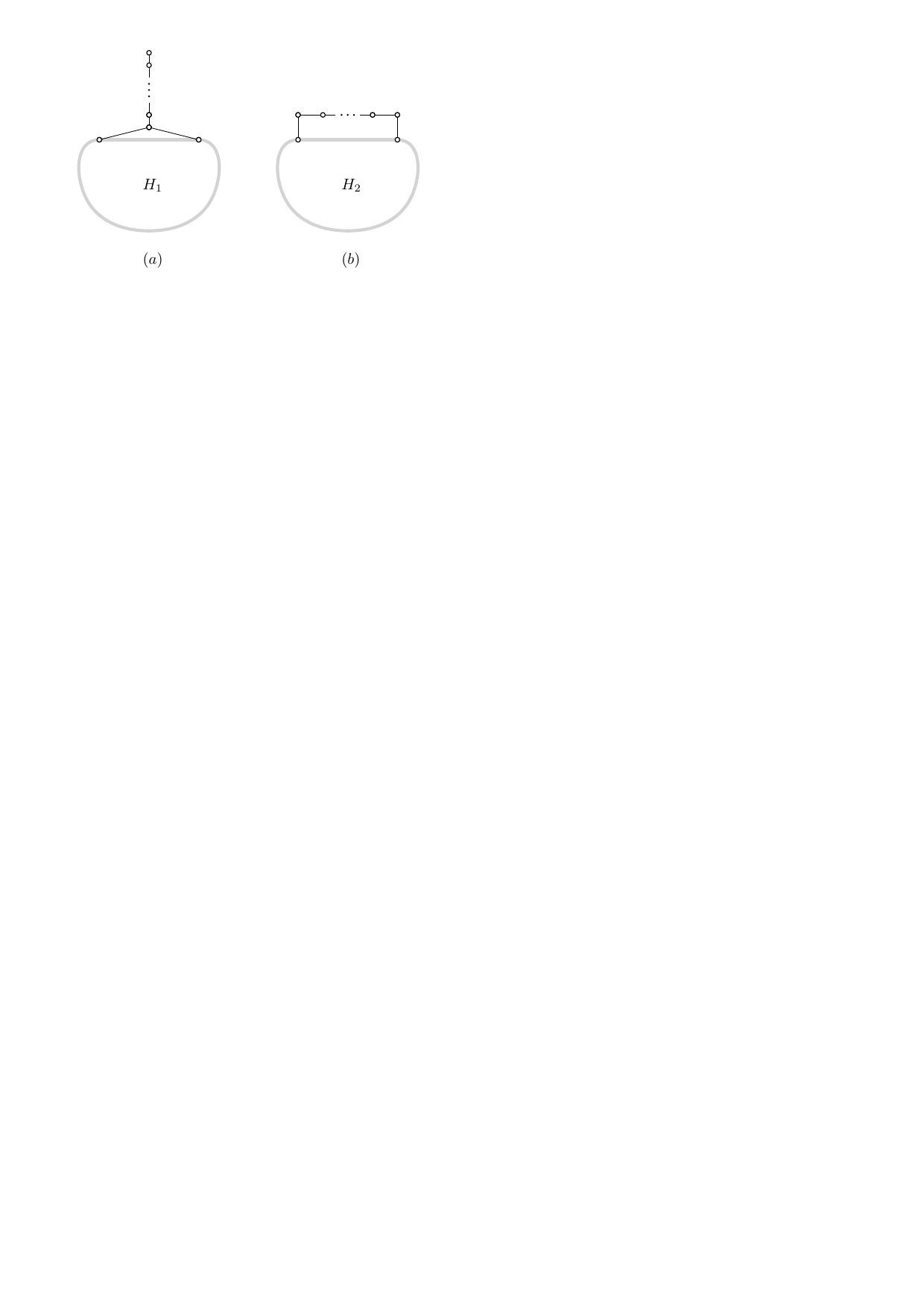}
   \caption{Two graphs used in the proof of Claim 1 of Lemma \ref{lem-01}.}
    \label{Fig-1}
     \end{figure}

\noindent
Figure \ref{Fig-1}(a) depicts $G$ (where every vertex in its subgraph $H_1$ has a degree $4$ in $G$), whereas Figure \ref{Fig-1}(b) shows the graph $G^\ddag$ (where every vertex in its subgraph $H_2$ has a degree $3$ in $G^\ddag$). Here, we have \[
\mathcal{DSO}(G^\ddag)=(n-2\ell+1)f(2,2)+(3\ell-4)f(3,3)+2f(2,3).
\]
Hence,
the difference $\mathcal{DSO}(G)-\mathcal{DSO}(G^\ddag)$ is equal to
\[
(\ell-3)f(2,2)+(2\ell-3)f(4,4)+f(1,2)-f(2,3)+2f(3,4)-(3\ell-4)f(3,3),
\]
which (is independent of $\ell$ because $f(2,2)=f(3,3)=f(4,4)$ and) is positive. This contradicts the definition of $G$. This completes the proof of Claim 1.\\[2mm]
\indent Next, using Claim 1, we discuss several possibilities. \\[2mm]
{\bf Case 2.2.1.} There exists a vertex $p\in V(G^*)\setminus\{w\}$ such that $d_G(p)=1$.\\
Let $p_1$ be the unique neighbor of $p$ in $G$.
We form a new graph $G_3$ using $G$ by deleting the edges $w_1w,w_2w,$ and inserting the edges $w_1w_2,wp$. Then, we have
\begin{align*}
 \mathcal{DSO}(G)-\mathcal{DSO}(G_3) &=f(d_G(p_1),1)-f(d_G(p_1),2) \nonumber\\[2mm]
 &\quad +f(2,3)+2f(3,4)- f(2,2)\\[2mm]
 &\quad-f(2,2)-f(4,4)>0,
  \end{align*}
a contradiction.\\[2mm]
{\bf Case 2.2.2.} There does not exists any vertex $p\in V(G^*)\setminus\{w\}$ satisfying $d_G(p)=1$, but there is a vertex $q\in V(G^*)\setminus\{w\}$ such that $d_G(q)=2$.\\
Since $G$ is connected, we choose $q$ in such a way that at least one of its neighbors has degree at least $3$ in $G$. Let $q_1$ and $q_2$ be the neighbors of $q$ such that $d_G(q_1)\ge3$ and $d_G(q_2)\ge2$.
We form a new graph $G_4$ using $G$ by deleting the edges $w_1w,w_2w,$ and inserting the edges $w_1w_2,wq$. Then, we have
\begin{align*}
 \mathcal{DSO}(G)-\mathcal{DSO}(G_4) &=\sum_{i=1}^2 \big[f(d_G(q_i),2)-f(d_G(q_i),3) \big]\nonumber\\[2mm]
 &\quad +2f(3,4)-f(2,2)-f(4,4)>0,
  \end{align*}
a contradiction.\\[2mm]
{\bf Case 2.2.3.} There does not exist any vertex $p\in V(G^*)\setminus\{w\}$ such that $d_G(p)=1$ or $d_G(p)=2$, but there is a vertex $z\in V(G^*)\setminus\{w\}$ such that $d_G(z)=3$.\\
Since $G$ is connected, we choose $z$ in such a way that at least one of its neighbors has degree $4$ in $G$. Let $z_1$, $z_2$ and $z_3$ be the neighbors of $z$ such that $d_G(z_1)=4$, $d_G(z_2)\ge3$ and $d_G(z_3)\ge3$.\\[2mm]
{\bf Case 2.2.3.1.} $d_G(z_2)=d_G(z_3)=4$.\\
We form a new graph $G_5$ using $G$ by deleting the edges $w_1w,w_2w,$ and inserting the edges $w_1w_2,wz$. Then, we have
\begin{align*}
 \mathcal{DSO}(G)-\mathcal{DSO}(G_5) &=5f(4,3)-4f(4,4)+f(2,3)\\[2mm]
 &\quad - f(2,4) -f(2,2)>0,
  \end{align*}
a contradiction.\\[2mm]
{\bf Case 2.2.3.2.} $\min\{d_G(z_2),d_G(z_3)\}= 3$.\\
Without loss of generality, we assume that $d_G(z_2)=3$. Then, either $d_G(z_3)=3$ or $d_G(z_3)=4$.
Now, we form a new graph $G_6$ using $G$ by deleting the edges $w_1w,w_2w,z_2z,$ and inserting the edges $w_1w_2,wz,z_2u$. Then, we have
\begin{align*}
 \mathcal{DSO}(G)-\mathcal{DSO}(G_6) &=f(1,2)+2f(3,4) +f(3,3)\\[2mm]
 &\quad-2f(2,2)- f(4,4)-f(2,3)>0,
  \end{align*}
a contradiction.
\end{proof}

\begin{theorem}\label{thm-1}
Let $G$ be a molecular $n$-order graph with cyclomatic number $\ell$ such that  $n \geq 2 (\ell-1)\ge 4$.
Then,
\begin{equation}\label{thm-1-Eq-stat}
\mathcal{DSO}(G) \ge \frac{n+\ell-3}{\sqrt{2}}+\frac{2\sqrt{13}}{5}.
\end{equation}
If $n=2(\ell-1)$, then the equality in \eqref{thm-1-Eq-stat} holds if and only if $G$ is $3$-regular. If $n>2(\ell-1)$, then the equality in \eqref{thm-1-Eq-stat} holds if and only if $\delta(G)=2$, $\Delta(G)=3$, $m_{3,3}(G)= 3\ell-4$, $m_{2,3}(G)=2$ and $m_{2,2}(G)=n-2\ell+1$. Examples of graphs satisfying the equality in \eqref{thm-1-Eq-stat} are given in Figure \ref{Fig-2}.
\end{theorem}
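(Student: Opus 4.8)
The plan is to recast the minimization as the minimization of a single nonnegative quantity. The elementary facts I would use are that $f(i,i)=\tfrac{1}{\sqrt2}$ for every $i$, that $f(i,j)>\tfrac{1}{\sqrt2}$ whenever $i\neq j$ (because $2(i^2+j^2)-(i+j)^2=(i-j)^2>0$), and that $f$ is invariant under scaling, so e.g.\ $f(2,4)=f(1,2)$. Writing $m=n+\ell-1$ for the number of edges of a connected graph of order $n$ with cyclomatic number $\ell$, these give the decomposition
\[
\mathcal{DSO}(G)=\frac{m}{\sqrt2}+\sum_{\substack{uv\in E(G)\\ d_G(u)\neq d_G(v)}}\left(f(d_G(u),d_G(v))-\frac{1}{\sqrt2}\right),
\]
in which every summand is strictly positive. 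Minimizing $\mathcal{DSO}(G)$ is therefore the same as minimizing the total \emph{excess} carried by the heterogeneous edges, and the whole proof is a hunt for the degree sequence and adjacency pattern that makes this excess smallest.

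Next I would determine the degree sequence of a minimizer $G$. By Lemma \ref{lem-01} together with a family of analogous local surgeries---each deleting one edge and inserting another so as to keep $n$, $m$, and connectivity (hence $\ell$) fixed while strictly decreasing $\mathcal{DSO}$---one shows that a minimizer has no pendent vertex, i.e.\ $\delta(G)\ge2$. The harder reduction is $\Delta(G)\le3$, for which I would reuse the mechanism of Case~2.2 and Claim~1 in the proof of Lemma \ref{lem-01}: a block of mutually adjacent degree-$4$ vertices can be demoted to a degree-$3$ block, and the controlling quantity $-2/\sqrt2+f(1,2)-f(2,3)+2f(3,4)>0$ is independent of $\ell$, so each demotion strictly lowers the index. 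Running this over every local configuration containing a degree-$4$ vertex yields $\Delta(G)\le3$, after which all degrees of $G$ lie in $\{2,3\}$.

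With degrees confined to $\{2,3\}$, only the edge types $(2,2),(2,3),(3,3)$ occur and the decomposition collapses to
\[
\mathcal{DSO}(G)=\frac{m}{\sqrt2}+m_{2,3}(G)\left(\frac{\sqrt{13}}{5}-\frac{1}{\sqrt2}\right),
\]
which is strictly increasing in $m_{2,3}(G)$; it remains only to minimize $m_{2,3}(G)$. Writing $n_i$ for the number of degree-$i$ vertices, a degree count gives $n_3=2(\ell-1)$ and $n_2=n-2(\ell-1)$, and since $3n_3=2m_{3,3}(G)+m_{2,3}(G)$ is even (as $n_3=2(\ell-1)$), the number $m_{2,3}(G)$ is itself even. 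If $n=2(\ell-1)$ then $n_2=0$, so $G$ is $3$-regular with $m_{2,3}(G)=0$, which is the first equality case. If $n>2(\ell-1)$ then both degree classes are nonempty, so connectedness forces $m_{2,3}(G)\ge1$ and evenness upgrades this to $m_{2,3}(G)\ge2$; substituting $m_{2,3}(G)=2$ returns $m_{3,3}(G)=3\ell-4$ and $m_{2,2}(G)=n-2\ell+1$, the profile in the statement, and the displayed identity then gives the value $\tfrac{n+\ell-3}{\sqrt2}+\tfrac{2\sqrt{13}}{5}$. The explicit $C^\star$- and $G^\ddag$-type graphs constructed in the proof of Lemma \ref{lem-01} (cf.\ Figure \ref{Fig-2}) realize these profiles, establishing sharpness.

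The step I expect to be the real obstacle is the reduction $\Delta(G)\le3$. The difficulty is that the naive ``pick the cheapest heterogeneous edge'' heuristic misleads: one $(3,4)$ edge has excess $\tfrac{5}{7}-\tfrac{1}{\sqrt2}\approx0.0072$, smaller than the excess $\tfrac{\sqrt{13}}{5}-\tfrac{1}{\sqrt2}\approx0.0140$ of a $(2,3)$ edge, so degree-$4$ vertices cannot be excluded edge-by-edge. The exclusion must be global: any block of degree-$4$ vertices can reach the rest of the graph only through heterogeneous ``exit'' edges whose number is even---an internal-degree parity count gives at least two---and these are additional to the two heterogeneous edges already forced for integrating the degree-$2$ part when $n>2(\ell-1)$. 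Making this trade-off quantitative, so that the individually cheaper $(3,4)$ edges can never offset the extra heterogeneous edges a degree-$4$ block compels, is where the bulk of the casework sits.
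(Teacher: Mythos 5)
Your endgame (degrees confined to $\{2,3\}$, minimize $m_{2,3}$ via parity and connectedness) matches the paper's in substance and is arguably cleaner, but the proposal has a genuine gap at its center: the reduction to $\delta(G)\ge 2$ and $\Delta(G)\le 3$ is never actually carried out, and you yourself flag it as ``where the bulk of the casework sits.'' That reduction \emph{is} the theorem. Your proposed mechanism does not transfer from the paper: the Case~2.2/Claim~1 comparison inside Lemma \ref{lem-01} compares two \emph{specific} graphs with degree sequences $(4,\dots,4,2)$ and the $G^\ddag$ profile at fixed $n$ and $\ell$; ``demoting'' a degree-$4$ block in an arbitrary graph changes the edge count (hence $\ell$) unless one rewires, and no rewiring is given. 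Your parity sketch is likewise not yet a proof: two $(3,4)$ exit edges cost only $2\bigl(\tfrac{5}{7}-\tfrac{1}{\sqrt2}\bigr)\approx 0.0143<2\bigl(\tfrac{\sqrt{13}}{5}-\tfrac{1}{\sqrt2}\bigr)\approx 0.0281$, and an exit edge of type $(2,4)$ can simultaneously serve as an ``integration'' edge for the degree-$2$ class, so the claimed additivity of the two even families of heterogeneous edges requires a case enumeration over the types $(1,2),(1,3),(1,4),(2,3),(2,4),(3,4)$ with parity constraints on both the degree-$2$ and degree-$4$ classes; none of this is done. The paper sidesteps the entire issue: it solves the two linear identities (vertex count and $\sum m_{i,j}=n+\ell-1$) for $m_{2,3}$ and $m_{3,3}$, substitutes into $\mathcal{DSO}$, and checks (Table \ref{table-1}) that the residual coefficient $h(i,j)$ is positive for every $(i,j)\in A^*$, so minimality forces $m_{i,j}=0$ for all those types at once; only the type $(1,2)$, whose coefficient is negative, needs the surgical Lemma \ref{lem-01}. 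This also exposes that your appeal to ``a family of analogous local surgeries'' for $(1,3)$- and $(1,4)$-pendants is both unspecified and, in the paper's scheme, unnecessary.

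There is also a concrete false step in your equality discussion, which your own decomposition makes visible: if $n=2(\ell-1)$ and $G$ is $3$-regular, then $m_{2,3}=0$ and your identity gives $\mathcal{DSO}(G)=\tfrac{m}{\sqrt2}=\tfrac{n+\ell-1}{\sqrt2}$, which is \emph{strictly smaller} than $\tfrac{n+\ell-3}{\sqrt2}+\tfrac{2\sqrt{13}}{5}$, since $\sqrt2<\tfrac{2\sqrt{13}}{5}$; for instance $\mathcal{DSO}(K_4)=\tfrac{6}{\sqrt2}\approx 4.2426$, below the right-hand side $\approx 4.2706$ of \eqref{thm-1-Eq-stat}. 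So declaring $3$-regular graphs ``the first equality case'' is arithmetically impossible --- they in fact violate the stated inequality. You inherited this from the statement itself: the paper's proof derives the displayed value only under the assumption $n>2(\ell-1)$, so the $n=2(\ell-1)$ clause of the theorem does not match either proof, and in that case the correct sharp bound is $\tfrac{n+\ell-1}{\sqrt2}$. Your framework was well placed to catch this, and a complete write-up should state the two cases with their two different extremal values.
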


\begin{figure}[!ht]
 \centering
  \includegraphics[width=0.65\textwidth]{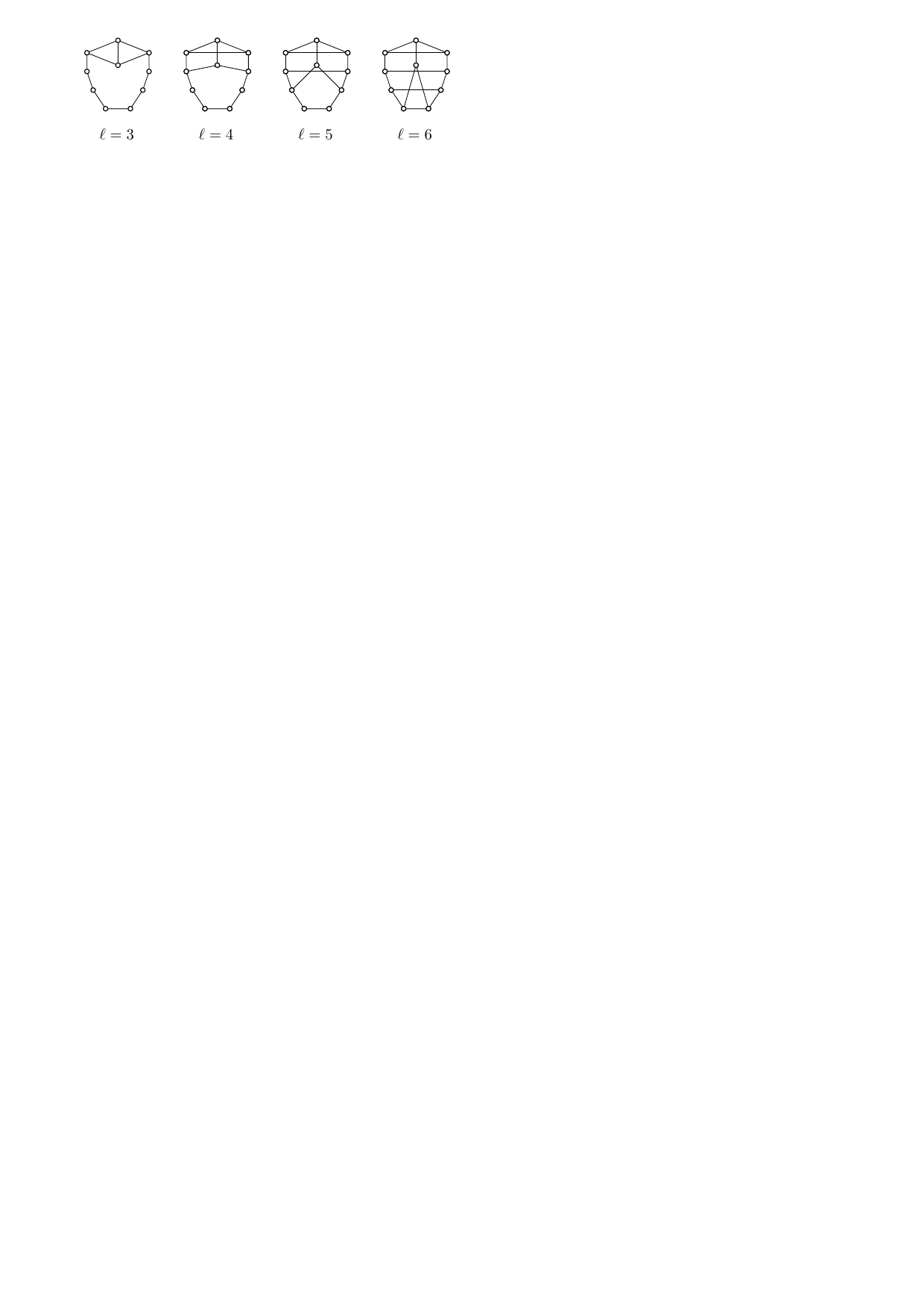}
   \caption{Examples of $10$-order graphs satisfying the equality in \eqref{thm-1-Eq-stat}.}
    \label{Fig-2}
     \end{figure}

\begin{proof}
Let $G^*$ be a graph minimizing the DSO index among all molecular $n$-order graphs with cyclomatic number $\ell$ such that $n \ge 2(\ell-1)\ge 4$. Then,
\begin{equation}\label{thm-1-eq-01}
\mathcal{DSO}(G)\ge \mathcal{DSO}(G^*).
\end{equation}
Also, by Lemma \ref{lem-01}, $G^*$ contains no pendent edge incident with a vertex of degree $2$; that is, $m_{1,2}(G^*)=0$.  In the rest of the proof, we use the notations $m_{i,j}:=m_{i,j}(G^*)$ and $$A:=\big\{(i,j): 1\le i\le j\le 4 \text{ and } i,j\in \mathbb{Z}_+\big\}\setminus\{(1,1),(1,2)\},$$
where $\mathbb{Z}_+$ represents the set of positive integers. Here, we have
\begin{equation*}
    \sum_{(i,j)\in A}\left(\frac{1}{i}+\frac{1}{j}\right)m_{i,j}=|V(G^*)|=n,
\end{equation*}
which yields
\begin{equation}\label{thm-eq-02}
    \sum_{(i,j)\in A^*}\left(\frac{1}{i}+\frac{1}{j}\right)m_{i,j}=n-m_{2,2}-\frac{5}{6}m_{2,3}-\frac{2}{3}m_{3,3},
\end{equation}
where $A^*:=A\setminus\{(2,2),(2,3),(3,3)\}$.
Also, the identity $$    \sum_{(i,j)\in A}m_{i,j}=n+\ell-1$$ gives
\begin{equation}\label{thm-eq-03}
    \sum_{(i,j)\in A^*}m_{i,j}=n+\ell-m_{2,2}-m_{2,3}-m_{3,3}-1.
\end{equation}
By solving \eqref{thm-eq-02} and \eqref{thm-eq-03} for $m_{2,3}$ and $m_{3,3}$, we obtain
\begin{equation}\label{thm-1-eq-m23}
m_{2,3}=2(n-2\ell- m_{2,2}+2)+\sum_{(i, j) \in A^*}\left(4-6 \left(\frac{1}{i}+\frac{1}{j}\right)\right) m_{i,j}
\end{equation}
and
\begin{equation}\label{thm-1-eq-m33}
m_{3,3}=5(\ell-1)+m_{2,2}-n+\sum_{(i,j) \in A^*}\left(6 \left(\frac{1}{i}+\frac{1}{j}\right)-5\right) m_{i,j}
\end{equation}
Now, using \eqref{thm-1-eq-m23} and \eqref{thm-1-eq-m33} in the formula of $\mathcal{DSO}(G^*)$, we obtain
\begin{align}
&\mathcal{DSO}(G^*)=\sum_{(i,j)\in A} \frac{\sqrt{i^2+j^2}}{i+j}m_{i,j}\nonumber\\[2mm]
&=\frac{1}{\sqrt{2}}(m_{2,2}+m_{3,3})+\frac{\sqrt{13}}{5}m_{2,3}+ \sum_{(i,j)\in A^*} \frac{\sqrt{i^2+j^2}}{i+j}m_{i,j}\nonumber\\[2mm]
%&=\frac{1}{\sqrt{2}}m_{2,2}+\frac{1}{\sqrt{2}}\left[5(\ell-1)+m_{2,2}-n+\sum_{(i,j) \in A^*}\left(6 \left(\frac{1}{i}+\frac{1}{j}\right)-5\right) m_{i,j}\right]\nonumber\\[2mm]
%&+ \frac{\sqrt{13}}{5}[2(n-2\ell- m_{2,2}+2)+\sum_{(i, j) \in A^*}\left(4-6 \left(\frac{1}{i}+\frac{1}{j}\right)\right) m_{i,j}]\nonumber\\[2mm]
%&+ \sum_{(i,j)\in A^*} \frac{\sqrt{i^2+j^2}}{i+j}m_{i,j}\nonumber\\[2mm]
&=\left(\sqrt{2}-\frac{2\sqrt{13}}{5}\right)m_{2,2}+ \left(\frac{2\sqrt{13}}{5}-\frac{1}{\sqrt{2}}\right)n+\left(\frac{5}{\sqrt{2}}-\frac{4\sqrt{13}}{5}\right)(\ell-1)\nonumber\\[2mm]
&+\sum_{(i,j) \in A^*}\underbrace{\left[\left(3\sqrt{2}  -\frac{6\sqrt{13}}{5}\right) \left(\frac{1}{i}+\frac{1}{j}\right) +  \frac{\sqrt{i^2+j^2}}{i+j}+\frac{4\sqrt{13}}{5}-\frac{5}{\sqrt{2}}\right]}_{h(i,j)}m_{i,j}\nonumber\\[2mm]
&\ge \left(\underbrace{\sqrt{2}-\frac{2\sqrt{13}}{5}}_{\text{negative}}\right)m_{2,2}+ \left(\frac{2\sqrt{13}}{5}-\frac{1}{\sqrt{2}}\right)n+\left(\frac{5}{\sqrt{2}}-\frac{4\sqrt{13}}{5}\right)(\ell-1),\label{Eq-main}
\end{align}
where the inequality in \eqref{Eq-main} follows from Table \ref{table-1}.

\begin{table}[h!]
\caption{Approximate values of $h(i, j)$ used in the inequality
\eqref{Eq-main}.}\label{table-1}
\renewcommand{\arraystretch}{1.5}
\centering
\begin{tabular}{|c|c|c|c|c|c|}
\hline
$(i,j)$ & $(1,3)$ & $(1,4)$ & $(2,4)$ & $(3,4)$ & $(4,4)$ \\
\hline
$h(i,j)$  & 0.0274 & 0.0685 & 0.0312 & 0.0142 & 0.0140 \\
\hline
\end{tabular}
\end{table}

Because of the definition of $G^*$, the inequality in \eqref{Eq-main} must be equality, which implies that $m_{i,j}=0$ for every $(i,j)\in A^*$. Hence, we have either $\delta(G^*)=\Delta(G^*)=3$ or $(\delta(G^*),\Delta(G^*))=(2,3)$.

If $n=2(\ell-1)$, then we can not have $(\delta(G^*),\Delta(G^*))=(2,3)$; for otherwise, we have $n_2(G^*)=n-2(\ell-1)=0$, a contradiction. Hence, if $n=2(\ell-1)$, then $G^*$ is a $3$-regular graph.

In the rest of the proof, we assume that $n>2(\ell-1)$. Then, we can not have $\delta(G^*)=\Delta(G^*)=3$; for otherwise, we have $3n=2(n+\ell-1)$ (by the Handshaking Lemma), a contradiction. Hence, $(\delta(G^*),\Delta(G^*))=(2,3)$.
Since $G^*$ is connected, it holds that $m_{2,3}>0$. Hence, from the relation $2m_{2,2}+m_{2,3}=2n_2$, we conclude that $m_{2,3}$ is a positive even integer. So, $2m_{2,2}+2\le 2m_{2,2}+m_{2,3}=2n_2=2(n-2(\ell-1))$, which yields $m_{2,2}\le n-2\ell+1$ with equality if and only if $m_{2,3}=2$. Consequently, by keeping in mind the definition of $G^*$, from \eqref{Eq-main}, we have $m_{2,2}= n-2\ell+1$, which implies $m_{2,3}=2$, and hence, $m_{3,3}= 3\ell-4$. Thus,
\begin{align*}
\mathcal{DSO}(G^*)&=\left(\sqrt{2}-\frac{2\sqrt{13}}{5}\right)(n-2\ell+1)+ \left(\frac{2\sqrt{13}}{5}-\frac{1}{\sqrt{2}}\right)n\\[2mm]
&+\left(\frac{5}{\sqrt{2}}-\frac{4\sqrt{13}}{5}\right)(\ell-1),
\end{align*}
that is,
\[
\mathcal{DSO}(G^*)=\frac{1}{\sqrt{2}}(n+\ell-3)+\frac{2\sqrt{13}}{5}.
\]

Therefore, by  \eqref{thm-1-eq-01}, the theorem follows.
\end{proof}

\section{Concluding Remarks}

One of the natural extensions of the present study is to extend
Theorem \ref{thm-1}
for all fixed-order connected graphs with a given cyclomatic number $\ell\ge3$. Particularly, if the text ``molecular'' is replaced with ``connected'' in Theorem \ref{thm-1}, then it is expected that  the resulting statement remains valid. To prove this modified statement of Theorem \ref{thm-1}, it is enough to prove the corresponding modified statement of Lemma \ref{lem-01} (which is obtained by replacing the text ``molecular'' with ``connected'' there)
because we observe that the function $h(i, j)$ used in the inequality
\eqref{Eq-main} remains positive for every $(i,j)\in A^{**}$, where  $A^{**}$ is the set obtained from $A^{*}$ by replacing the constraint $1\le i\le j\le 4$ with $1\le i\le j$.

Establishing the maximal version of Theorem \ref{thm-1} (for molecular graphs as well as for all connected graphs) is another natural open problem concerning the present study.

\acknowledgment{This study is supported via funding from Prince Sattam bin Abdulaziz University, Saudi Arabia (PSAU/2025/R/1447).}

\end{document}